\theoremstyle{plain}
\newtheorem{theorem}{Theorem}
\newtheorem{lemma}{Lemma}
\newtheorem{conjecture}{Conjecture}
\theoremstyle{definition}
\newtheorem{definition}{Definition}
\newtheorem{example}{Example}
\newtheorem{remark}{Remark}
\def\S{\mathcal{S}}
\def\G{\mathcal{G}}
\def\A{\mathcal{A}}
\begin{document}

\title{On the expansion of three-element subtraction sets}

\author{Nhan Bao Ho}
\address{Department of Mathematics, La Trobe University, Melbourne, Australia 3086}
\email{nhan.ho@latrobe.edu.au, nhanbaoho@gmail.com}

%

\subjclass[2000]{Primary: 91A46}
\keywords{combinatorial games, subtraction games, bipartite subtraction games, subtraction sets, nim-sequence, periodicity, expansion set}

\thanks{}

\begin{abstract}
We study the periodicity of nim-sequences for subtraction games having subtraction sets with three elements. In particular, we give solutions in several cases, and we describe how these subtraction sets can be augmented by additional numbers without changing the nim-sequences. The paper concludes with a conjecture on ultimately bipartite subtraction games.
\end{abstract}

\maketitle

\section{Introduction}

A \emph{subtraction game} is a two-player game involving a pile of coins and a finite set $S$ of positive integers called the \emph{subtraction set}. The two players move alternately, subtracting some $s$ coins such that $s \in S$. The player who makes the last move wins. Subtraction games provide classical examples of impartial combinatorial games; see \cite{les, ww1, ww3}. They are completely understood for two-element subtraction sets. For larger subtraction sets, they are known to be all ultimately periodic \cite[p.38]{guy}, but a complete solution of these games is still not known, even for three-element subtraction sets. The purpose of this paper is to report further results in this area.

Throughout this paper, the subtraction set $S=\{s_1, s_2, \ldots, s_k\}$ will be ordered  $s_1 < s_2 < \cdots < s_k$. The subtraction game with subtraction set $S$ is denoted by $\S$. When we need to specify the subtraction set, we will use $\S(S)$ or $\S(s_1, s_2, \ldots, s_k)$.

For each nonnegative integer $n$, denote $\G(n)$ the \emph{Sprague-Grundy value}, or \emph{nim-value} for short, of the single pile of size $n$ of the subtraction game $\S$. The sequence $\{\G(n)\}_{n \geq 0}$ is called the \emph{nim-sequence}.

\begin{sloppypar}
Suppose for the moment that a subtraction set $\{s_1, s_2, \ldots, s_k\}$ has $d = \gcd(s_1, s_2, \ldots, s_k) > 1$. Let $s_i' = s_i/d$ for $1 \leq i \leq k$. The nim-sequence for the game $\S(s_1, s_2, \ldots, s_k)$ is exactly the $d$-plicate of that for the game $\S(s_1', s_2', \ldots, s_k')$. That means the former can be obtained from the latter by repeating each value of the latter exactly $d$ times \cite[p.~529]{ww3}. Thus, it suffices to consider subtraction sets whose members are relatively prime.
\end{sloppypar}

Recall that the sequence $\{\G(n)\}_{n \geq 0}$ is said to be \emph{ultimately periodic} if there exist integers $p \geq 1$ and $n_0 \geq 1$ such that $\G(n + p) = \G(n)$ for all $n \geq n_0$. The smallest such numbers $n_0$ and $p$ are called the \emph{pre-period length} and \emph{period length} respectively \cite[p.~145]{les}.  If $n_0 = 0$, the sequence is said to be \emph{purely periodic}. A purely periodic (resp.~ultimately periodic) game is called {\it bipartite} (resp.~{\it ultimately bipartite}) if $p = 2$. (Ultimately bipartite subtraction games ultimately have alternating nim-values $0,1,0,1,\ldots$). The periodicity of subtraction games is discussed in \cite{les, Ingo, ww1, ww3}. Bipartite games are first introduced in \cite{bi}.

Throughout this paper, when saying that a subtraction game has periodic nim-values $g_1 g_2\ldots g_p$, we mean that ultimately, the nim-sequence is the infinite repetition of the subsequence $g_1,g_2,\ldots,g_p$. The $n_0$ nim-values $\G(0),\G(1), \ldots, \G(n_0-1)$ is called the pre-periodic nim-values. The  omission of commas between the nim-values does not lead to any misunderstanding as all nim-values presented in this paper have exactly one digit.

The following lemma gives us a useful idea of how much calculation we need to perform to determine the pre-period and period lengths.

\begin{lemma} \label{per.sub} \cite[p.~148]{les}
Let $s_k = \max(S)$. For minimal $n_0$ and $p$ such that $\G(n+p)=\G(n)$ for $n_0 \leq n < n_0+s_k$, the subtraction game $\S$ is purely periodic with the pre-period length $n_0$ and the period length $p$.
\end{lemma}

\begin{remark} \label{R.exp} \cite[p. 84]{ww1}
For a given subtraction set $S$, if there exists a positive integer $s$ such that $\G(n+s) \neq \G(n)$ for all nonnegative integers $n$, then $s$ can be adjoined to the subtraction set $S$ without changing the nim-sequence. In this case, for brevity, we will simply say that $s$ {\it can be adjoined to} $S$. The set of all such elements, including elements in $S$, is called the {\it expansion set} of $S$ and denoted by $S^{ex}$.
\end{remark}

In reality, to identify such an $s$, we need to calculate only a small range of nim-values, as much as the calculation needed in Lemma \ref{per.sub}. We detail this range as follow.

\begin{theorem} \label{ext.form}
Let $\S$ be a subtraction game with pre-period length $n_0$ and period length $p$.
\begin{enumerate}  \itemsep0em
\item [$(i)$] A number $s < n_0+p$ can be adjoined to $S$ if and only if $\G(n+s) \neq \G(n)$ for all $n$ such that $0 \leq n < n_0+p$.
\item [$(ii)$] A number $s \geq n_0 + p$ can be adjoined to $S$ if and only if $s-p$ can be.
\end{enumerate}
\end{theorem}
\begin{proof}
The theorem follows immediately from  Remark \ref{R.exp} and the definition of ultimately periodicity.
\end{proof}

It follows from Theorem \ref{ext.form} that if $s \geq n_0 + p$ and $s$ can be adjoined to $S$ then $s = s'+mp$ for some $m$ and $s'$ such that $n_0 \leq s' < n_0+p$ and $s'$ can be adjoined to $S$. We are going to employ this idea to represent $S^{ex}$ with no more than $\max(n_0 + p, s_k)$ elements with $s_k = \max(S)$. (There is another method to present $S^{ex}$ with no more than $n_0+p$ elements but we avoid its complexity).

Let us partition $S$ into two parts: $S_1$ containing elements $s$ of $S$ such that $s < n_0$ and $S_2 = S \setminus S_1$. Let $S_1'$ (resp.~$S_2'$) be the set of all $s$ which can be adjoined to $S$ such that $s \notin S$ and $s < n_0$ (resp.~$n_0 \leq s < n_0 + p$). Then
\[S^{ex} = \{S_1 \cup S_1'\} \cup \{S_2 \cup S_2'\}^{\ast p}\]
in which $T^{\ast p} = \{t+mp| t \in T, m \geq 0\}$. Note that in the formula for $S^{ex}$, some of the sets $S_1$, $S_1'$, $S_2$, and $S_2'$ may be empty. In particular, $S_1 \cup S_1' = \emptyset$ if $\S$ is purely periodic ($n_0 = 0$).

\begin{example}
The subtraction game $\S(1, 8, 11, 27)$ is ultimately periodic with $n_0 = 13$, $p = 19$ and has expansion set $S^{ex} = \{1, 8, 11\} \cup \{13, 20, 27\}^{\ast 19}$. Here $S_1 = \{1,8,11\}$, $S_1' = \emptyset$, $S_2 = \{27\}$, and $S_2' = \{13,20\}$.
\end{example}

\begin{definition}
If $S^{ex} = \{S_1\} \cup \{S_2\}^{\ast p}$ (equivalently, $S_1' \cup S_2' = \emptyset$), the set $S$ is said to be \emph{non-expandable}. Otherwise, it is {\it expandable}.
\end{definition}

Some values of the expansion sets of certain subtraction sets with numbers up to 7 can be found in \cite[pp.~84-85]{ww1}.

The following result from \cite{bi} shows that for bipartite games, $S^{ex}$ is the set of odd positive integers.


\begin{theorem} \label{bip}
Let $S$ be a subtraction set with $\gcd(S) = 1$. The subtraction game $\S$ is bipartite if and only if $1 \in S$ and the elements of $S$ are all odd.
\end{theorem}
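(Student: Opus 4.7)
The plan is to prove the biconditional by handling the two directions separately. The forward direction requires only unpacking the definition of bipartite together with the \textrm{mex} definition of the Sprague--Grundy function, while the reverse direction follows by a clean strong induction on the pile size $n$ that exploits the parity-flipping behaviour of subtracting odd numbers.

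For the forward direction, assume $\S(S)$ is bipartite, so $\G(n) = n \bmod 2$ for every $n \geq 0$. To see that $1 \in S$, note that $\G(1) = 1$, so by the definition of mex there must be some $s \in S$ with $s \leq 1$ and $\G(1-s) = 0$; the only possibility is $s = 1$. To see that every element of $S$ is odd, pick any $s \in S$ and consider the pile of size $s$: subtracting $s$ reaches the pile of size $0$ with $\G(0) = 0$, so $0$ lies in the mex set computing $\G(s)$, forcing $\G(s) \geq 1$. Since $\G$ only takes values in $\{0,1\}$ we get $\G(s) = 1$, hence $s$ is odd.

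For the reverse direction, assume $1 \in S$ and every element of $S$ is odd, and prove by strong induction on $n$ that $\G(n) = n \bmod 2$. The base case $n = 0$ is immediate. For $n \geq 1$, the options from the pile of size $n$ are the piles of size $n - s$ for those $s \in S$ with $s \leq n$; since $1 \in S$ there is at least one such option. By the inductive hypothesis each reachable nim-value equals $(n-s) \bmod 2$, and because every $s \in S$ is odd this common value is $(n-1) \bmod 2 = 1 - (n \bmod 2)$. Thus the set of reachable nim-values is precisely $\{1 - (n \bmod 2)\}$, whose mex is $n \bmod 2$, as required. Lemma~\ref{per.sub} is not needed here since the induction produces the nim-sequence outright.

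Since both directions are short and elementary, there is no serious obstacle; the only place one must be careful is in observing that for $n \geq 1$ the option set is nonempty (which is precisely why the hypothesis $1 \in S$ is needed and cannot be replaced merely by ``the elements are odd''). The hypothesis $\gcd(S) = 1$ in the statement is automatically guaranteed in each direction once we know $1 \in S$, so it plays no active role beyond the standing convention for subtraction games.
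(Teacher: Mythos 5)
Your proof is correct: the forward direction correctly extracts $1\in S$ from $\G(1)=1$ via the mex of a possibly empty option set, and oddness of each $s$ from the option $\G(0)=0$ at pile $s$; the reverse direction is a clean parity induction whose only delicate point (nonemptiness of the option set, supplied by $1\in S$) you handle explicitly. Note that the paper itself gives no proof of this theorem --- it is quoted from the cited reference on ultimately bipartite subtraction games --- so there is nothing internal to compare against; your argument is the standard self-contained one, and your closing remark that $\gcd(S)=1$ plays no active role (it is forced once $1\in S$) is also accurate.
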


\begin{remark} \label{R.proof}
The key results of this paper are the determinations of periodic nim-values and expansion sets in several specific cases. We do not give the proofs for all results. There are considerable similarities in the proofs of various results. We give one proof for the expansion set of the subtraction set $\{a, b\}$ and one proof for the nim-sequence for the subtraction game $\S(1,a,b)$. The reason is simply that these two games are addressed early in the paper. The details of proofs for other stated results are similar; some are quite tedious but they are all straightforward.
\end{remark}

This paper is organized as follows. Section \ref{S.result} lists our results on 5 classes of subtraction sets: $\{a, b\}$, $\{1, a, b\}$, $\{a, b, a+b\}$, $\{a, b, a+b+1\}$, and $\{a, b, a+b+2ja\}$. We consider various cases for each class. Section \ref{S.proof} gives two proofs as mentioned in Remark \ref{R.proof}. We provide proofs in Section \ref{S.proof} rather than in Section \ref{S.result} so that the reader can follow Section \ref{S.result} easily. In Section \ref{S.ubs}, we present a family of ultimately bipartite subtraction games and we examine the expansion sets of subtraction sets of ultimately bipartite subtraction games. The section ends with a conjecture that has become apparent during this work.

We include a code written in Maple software in the appendix. The code receives input $(S, n)$ in which $S$ is the subtraction set and the code will calculate the sequence $\{\G(i)\}_{0 \leq i \leq n}$ to identify the nim-sequence. If the input $n$ is large enough (at least $n_0+p+s_k+1$), the code will return: pre-period and period length, and pre-periodic and periodic nim-values, together with the expansion set $S^{ex}$. When $n$ is not large enough, the code will return nothing, requiring a larger $n$.

Our results also provide evidential support for the claim that the understanding of the pattern of the periodicity of subtraction games and the expansion sets of their subtraction sets is very far from being complete. For example, slight differences on the subtraction sets in Table \ref{T.2.3.3} would result in considerable modifications of both pre-period and period lengths as well as expansion sets.


\section{Results of nim-sequences and expansion sets}   \label{S.result}


We first study the expansion set of the subtraction set $\{a,b\}$ where $a$ and $b$ are relatively prime. When $a=1$,  we only consider the case where $b$ is even since the case where $a=1$ and $b$ is odd is treated by  Theorem \ref{bip}.

Berlekamp et al. \cite[p.~530]{ww3} showed that if $b=ta+r$ such that $t \geq 1$ and $0 \leq r < a$, the subtraction game $\S(a,b)$ is purely periodic with period length $a + b$ and periodic nim-values
\begin{align*}
\begin{cases}
(0^a1^a)^{\frac{t}{2}} 0^r2^{a-r} 1^r,  &\text{if $t$ is even};\\
(0^a1^a)^{\frac{t+1}{2}} 2^r,           &\text{otherwise}.
\end{cases}
\end{align*}
Moreover, they also observed that  the expansion set $S^{ex}$ is periodic in the sense that if $n\in S^{ex}$, then $n+a+b \in S^{ex}$.  We generalize this observation in Theorem \ref{The-sub-2}.

\smallskip
\begin{theorem}  \label{The-sub-2}
Let $a$ and $b$ be relatively prime, positive integers such that $a < b$. Suppose furthermore that if $a=1$, then $b$ is even. Consider the subtraction game $\S(a,b)$. If $a+1 < b \leq 2a$, then the subtraction set has expansion set
\[ \{a,a+1, \ldots, b\}^{*(a+b)}.\]
If $a = 1$, or $b=a+1$, or $b > 2a$, the subtraction set is non-expandable.
\end{theorem}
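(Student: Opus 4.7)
The plan is to translate the condition ``$d \in S^{ex}$" into a difference-set question on the cyclic group $\Z/p\Z$, where $p = a + b$ is the period. Since $\G(n)$ is periodic of period $p$, it descends to a function $\Z/p\Z \to \{0, 1, 2\}$ with preimages $A = \G^{-1}(0)$, $B = \G^{-1}(1)$, $C = \G^{-1}(2)$. A positive integer $d$ lies in $S^{ex}$ if and only if $p \nmid d$ and the residue $d \bmod p$ lies outside $(A - A) \cup (B - B) \cup (C - C)$. Thus the theorem reduces to computing these three difference sets and identifying the complement of their union in $[1, p-1]$. Using the block decomposition from the formula recalled before the statement, one has $B = A + a$ as subsets of $\Z/p\Z$, so $B - B = A - A$; and $C$ is a single arc whose difference set $C - C$ is a short symmetric arc.

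For case 1 ($a+1 < b \leq 2a$, forcing $s = 1$ and $r = b - a \geq 2$), the set $A = [0, a)$ is itself a single arc of length $a$, and direct interval arithmetic in $\Z/p\Z$ gives $(A - A) \cup (C - C) = [0, a-1] \cup [b+1, p-1]$, whose complement in $[1, p-1]$ is exactly $\{a, a+1, \ldots, b\}$. This yields $S^{ex} = \{a, a+1, \ldots, b\}^{*p}$. At the boundary $r = 1$ the same computation gives $\{a, a+1\} = S$, which handles the sub-case $b = a+1$ of case 2. The sub-case $a = 1$ with $b$ even is handled separately: then $A$ and $B$ are arithmetic progressions of common difference $2$ in $\Z/(b+1)\Z$, and since $b + 1$ is odd, doubling is a bijection modulo $b+1$; consequently $A - A$ contains every residue except the images of $\pm b/2$, namely $\{1, b\} = S$, so again $S^{ex} = S^{*p}$.

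The substantive sub-case is $b > 2a$, so $s \geq 2$. Here $A$ is a disjoint union of up to $\lfloor s/2 \rfloor$ length-$a$ arcs at positions $0, 2a, 4a, \ldots$, plus a length-$r$ tail at $sa$ when $s$ is even. A direct enumeration shows that as a subset of $\Z$, $A - A$ consists of every integer of absolute value at most $b-1$ (or $sa-1$ when $s$ is odd) except the odd multiples $\pm a, \pm 3a, \ldots$ sitting in the gaps between consecutive arcs. After reduction modulo $p$, the positive and negative halves overlap in $[a+1, p-1]$, and the key observation is that each ``missing" positive odd multiple $(2k+1)a$ equals $p - (2\ell a + r)$ for a suitable $\ell \geq 1$, so the negative half fills it in; this covering works because $\gcd(a, r) = 1$ forces $r$ not to be a multiple of $a$. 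The only two residues in $[1, p-1]$ that remain uncovered are $\{a, b\}$, and the short arc $C - C$ adds no new residue. Hence $S^{ex} \cap [1, p-1] = \{a, b\} = S$, so $S^{ex} = S^{*p}$ and the subtraction set is non-expandable.

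The main obstacle is the enumeration of $A - A \bmod p$ in the sub-case $b > 2a$, where $A$ has several pieces and one must verify carefully that the overlap of positive and negative residues covers every residue except $\{a, b\}$; the rest of the proof is elementary interval arithmetic.
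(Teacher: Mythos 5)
Your proposal is correct, but it is organized genuinely differently from the paper's proof. The paper argues pointwise: after recalling the nim-sequence, it constructs for every candidate difference $c$ outside the claimed expansion an explicit witness $n$ with $\G(n+c)=\G(n)$, which leads to a lengthy case analysis on $c=ka+i$, the parities of $k$ and $s$, and the comparison of $i$ with $r$; for $a\le c\le b$ with $b\le 2a$ it checks directly from the block structure that no witness exists. You instead use pure periodicity to reduce membership in $S^{ex}$ to a difference-set computation in $\Z/(a+b)\Z$, observe $B=A+a$ so that $B-B=A-A$, compute $A-A$ over $\Z$ as a full symmetric interval minus the odd multiples of $a$, and then show that the positive and negative halves modulo $p$ cover every nonzero residue except $\{a,b\}$ because $p-(2k+1)a\equiv r\not\equiv 0\pmod a$; the arc $C$ contributes nothing new. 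This buys a uniform argument: one wholesale computation replaces many witness constructions, and the sub-cases $a=1$, $b=a+1$, and $a+1<b\le 2a$ fall out of the same interval arithmetic (your doubling trick for $a=1$ is a nice shortcut), while equality of $S^{ex}$ with the stated set comes for free rather than one inclusion at a time. The cost is that the enumeration of $A-A$ and the mod-$p$ covering must be verified carefully, which is exactly where you correctly flag the work. Two small slips, neither affecting validity: for $s$ odd the number of length-$a$ arcs in $A$ is $(s+1)/2$, not $\lfloor s/2\rfloor$; and the complementary element of a missing odd multiple $(2k+1)a$ is $p-(2k+1)a=(s-2k)a+r$, whose coefficient of $a$ is odd when $s$ is odd, so it is not always of the form $2\ell a+r$ --- what matters, and what you in fact invoke, is only that it is congruent to $r$ modulo $a$ with $0<r<a$ (guaranteed by $\gcd(a,b)=1$ and $a\ge 2$), hence not itself an excluded odd multiple of $a$, and lies in the admissible range.
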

\begin{proof}
The proof is provided in Section \ref{S.proof}.
\end{proof}


We now examine the periodicity and the expansion sets of some subtraction games whose subtraction sets are $\{1,a,b\}$. For the case where $a$ is odd, Theorem \ref{bip} solves the case where $b$ is odd. When $b$ is even, the game is purely periodic. The periodic nim-values and the expansion set are described in Theorem \ref{1ab}.

\smallskip
\begin{theorem} \label{1ab}
Let $a$ be an odd positive integer and let $b$ be an even positive integer. The subtraction game $\S(1,a,b)$ is purely periodic with the period $a+b$ and the periodic nim-values
\[(01)^{\frac{b}{2}} (23)^{\frac{a-1}{2}}2.\]
Moreover, the subtraction set has expansion set
\[ \{ \{1, 3, \ldots, a \} \cup \{ b,b+2, \ldots, b+a-1\}  \} ^{\ast(a+b)}.\]
\end{theorem}

\begin{proof}
The proof is provided in Section \ref{S.proof}.
\end{proof}

For the case where $a$ is even, we represent $b = ka+r$ where $r < a$. Our results cover the cases $1 \leq k \leq 3$. Table \ref{T.2.3.1} (resp.~Table \ref{T.2.3.2}, Table \ref{T.2.3.3}) lists periodic nim-values and expansion sets of those games corresponding to the case $k = 1$ (resp.~$k =2$, $k = 3$). (The case $k = 1$ and $r=1$ (corresponding to $b = a + 1$) is analyzed in \cite[p.~530]{ww3}.) In our tables, we use ``period" rather than ``period length".

\begin{remark}
To make the tables easily readable, we do not include the pre-period lengths and pre-periodic nim-values of non-purely periodic games. We provide them in \ref{B1} (Tables \ref{T.2.3.1.b}, \ref{T.2.3.2.b}, \ref{T.2.3.3.b}).
\end{remark}

\begin{remark}
We place symbol ``$\cdots$" in front of nim-sequences of those games that are ultimately periodic to distinguish from purely periodic games.
\end{remark}

We now examine subtraction game $\S(a,b,a+b)$ with $2 \leq a$. Note that $b$ can be represented as $ka+r$ for some integers $k$ and $r$ such that $0 \leq r \leq a-1$. Recall from \cite[p.531]{ww3} that  when $k$ is odd, the game $\S(a,b,a+b)$ is purely periodic with period length $(a+b+ka)$ and periodic nim-values
\[ (0^a1^a)^{\frac{k+1}{2}} (2^a3^a)^{\frac{k-1}{2}} 2^a3^r.\]
We give the expansion set of this game in Table \ref{T.2.4}.

When $k$ is even, it is claimed in \cite[p.~531]{ww3} that the game is purely periodic with period length $(2b+r)a$. We have not seen a proof of this claim, and at present we do not know the nim-sequence and expansion set for this case.

We next give some results for subtraction games $\S(a,b,a+b+1)$ in Table \ref{T.2.6}. We obtain results for those cases where $b = 2a+r$ with $r \leq \min(2,a-1)$. Pre-period lengths and pre-periodic nim-values for those games that are non-purely periodic are provided in Table \ref{T.2.6.b} of \ref{B1}.

Finally, we study subtraction games $\S(a,b,b+2ja)$. Note that we can write $b = ka+r$ for some positive $k$ and $r$ such that $r < a$. The case where $r = 0$ is skipped here as it is studied before. We deal with two general cases with $j \in \{1, 2\}$ and one special case with $j = 3$ and $r = 1$ as shown in Table \ref{T.2.5}.



\newgeometry{bottom=2cm}

\begin{landscape}


\begin{center}
\begin{tabular}{|c|c|c|c|c|c|c|}\hline
\begin{tabular}{c}
Subtraction set  \\
$\{1,a,b\}$    \\
$a$: even \\
$b = a+r$
\end{tabular} & $r$     & period $p$
                                  & periodic nim-values
                                            & expansion set \\ \hline
$r = 2$       & \multirow{ 2}{*}{even}
                        & $a+1$   & $(01)^{\frac{a}{2}} 2 $
                                            & non-expandable \\ \cline{1-1} \cline{3-5}

$r > 2$       &         & $a+b$   & $(01)^{\frac{a}{2}} 2 (01)^{\frac{a}{2}} (23)^{\frac{r}{2}-1} 2$
                                            & $\{1,a,a+2,\ldots,b,a+b-1 \}^{\ast p}$ \\ \hline
$r = 1$       & \multirow{4}{*}{odd}
                        & $a+b-1$  & $(01)^{\frac{a}{2}} (23)^{\frac{a}{2}}$
                                            & \begin{tabular}{l}
                                            $\{ \{1,3, \ldots,a-1\} \cup \{ a \}$ \\
                                            $\cup \{ a+1, a+1+2 , \ldots, a+b-2\} \}^{\ast p}$
                                            \end{tabular}  \\ \cline{1-1} \cline{3-5}
\begin{tabular}{c}
$r < a-3$
\end{tabular}
              &         & \multirow{ 2}{*}{$a+b$}
                                    & \multirow{ 2}{*}{
                                    $\cdots 2(32)^{\frac{a-r-3}{2}}(01)^{\frac{r+1}{2}} 2 (01)^{\frac{a-2}{2}} 2 (01)^{\frac{r+1}{2}} $
                                    }
                                            & $\{1,a,b\} \cup \{b+2,a+b+1, 2a+b\}^{\ast p}$  \\  \cline{1-1} \cline{5-5}
\begin{tabular}{c}
$r = a-3$
\end{tabular}
              &         &           &
                                            & $\{1,a,b\} \cup \{b+2\}^{\ast p}$ \\  \cline{1-1} \cline{3-5}
\begin{tabular}{c}
$r = a-1$
\end{tabular}
                &       & $b+1$      & $(01)^{\frac{a}{2}} 2 (01)^{\frac{a-2}{2}} 2$
                                            & non expandable  \\ \hline
\end{tabular}
\begin{table}[ht]
\caption{Subtraction games $\S(1,a,b)$ with $b = a+r$.} \label{T.2.3.1}
\end{table}
\end{center}


\begin{center}
\begin{tabular}{|c|c|c|c|c|c|c|}\hline
\begin{tabular}{c}
Subtraction set \\
\{1,a,b\}  \\
$a$: even \\
$b = 2a+r$
\end{tabular}           & $r$   & period $p$    & periodic nim-values    & expansion set \\ \hline
$r = 0$, $a \geq 4$     &  \multirow{5}{*}{even}
                                & \multirow{4}{*}{$a+b$}
                                                & $(01)^{\frac{a}{2}} 2 (01)^{\frac{a}{2}} (23)^{\frac{a-2}{2}} 2$
                                                & $\{ \{ 1 \} \cup \{ a,a+2, \ldots,b \} \cup \{a+b-1\} \} ^{\ast p}$  \\ \cline{1-1} \cline{4-5}
$2 = r < a-4$        &       & 
                                                & $\cdots 2(32)^{\frac{a}{2}-3}(01)^2 2 ((01)^{\frac{a}{2}-1})^2 2 (01)^2$
                                                                        &  non-expandable   \\ \cline{1-1} \cline{4-5}
$4 \leq r < a-4$

                        &       & 
                                                & $\cdots 2(32)^{\frac{a}{2}-4}(01)^3 2 ((01)^{\frac{a}{2}} 2)^2 (01)^3$
                                                                        & \multirow{2}{*}{$\{1,a,b,b+2, a+b+1, a+2b+2\}$}   \\ \cline{1-1} \cline{3-4}
$4 < r = a-4$

                        &       & $a-1$
                                                & \begin{tabular}{c}
                                                 $\cdots 2 (01)^{\frac{a}{2}-1}$
                                                 \end{tabular}
                                                                        &     \\ \cline{1-1} \cline{3-5}

$4 \leq r = a-2$         &       & $b+1$        & $\cdots 2 (01)^{\frac{a-2}{2}} 2 (01)^{\frac{a}{2}} 2 (01)^{\frac{a-2}{2}}$
                                                                        & $\{1,a,b\} \cup \{b+2, a+b+1\}   ^{\ast p}$ \\    \hline
$r = 1, 3$              &  \multirow{ 2}{*}{odd}
                                & $a+1$       & $(01)^{\frac{a}{2}} 2$
                                                                        &  non-expandable (same as $S(1,a)$)  \\ \cline{1-1} \cline{3-5}
$5 \leq r \leq  a-1$    &       & $a+b$      & $((01)^{\frac{a}{2}} 2)^2 (01)^{\frac{a}{2}} (23)^{\frac{r-3}{2}} 2$
                                                                        & $\{1,a,a+2,b-2,b,a+b-1\}^{\ast p}$  \\ \hline
\end{tabular}
\begin{table}[ht]
\caption{Subtraction games $\S(1,a,b)$ with $a$ being even and $b = 2a+r$.} \label{T.2.3.2}
\end{table}
\end{center}


\begin{center}
\begin{tabular}{|c|c|c|c|c|c|c|c|}\hline
\begin{tabular}{c}
Subtraction set \\
\{1,a,b\} \\
$a$: even \\
$b = 3a+r$
\end{tabular}
            & $r$     & period $p$
                               & periodic nim-values
                                            & expansion set \\ \hline
$r = 0$, $a \geq 4$
            & \multirow{4}{*}{even}
                      & $b+1$  & $( (01)^{\frac{a}{2}} 2 )^2 (01)^{\frac{a}{2}-1} 2$
                                            & $\{ 1,a,2a+1,b \} ^{\ast p}$ \\ \cline{1-1} \cline{3-5}
$r = 2, 4$  &         & $a+1$  & $(01)^{\frac{a}{2}} 2$
                                            & non-expandable \\ \cline{1-1} \cline{3-5}
$r = 6$     &         & \multirow{6}{*}{$a+b$}
                               & \multirow{2}{*}{
                                 $((01)^{\frac{a}{2}} 2 )^4 (32)^{\frac{r}{2}-2}$
                                 }
                                            & $\{ 1, a, a+2, 2a+3, b-2, b, a+b-1 \}^{\ast p}$ \\ \cline{1-1} \cline{5-5}
$r > 6$     &         & 
                                & 
                                            & \multirow{2}{*}{
                                               $\{ 1,a,a+2,b-2, b, a+b-1 \}^{\ast p}$
                                               }
                                                             \\ \cline{1-2} \cline{4-4}
$r = 1$     & \multirow{4}{*}{odd}
                      & 
                                & $( (01)^{\frac{a}{2}} 2 )^3 (32)^{\frac{a}{2}-1}$
                                                             & 
                                                             \\ \cline{1-1} \cline{4-5}
$3 = r < a-5$&        & 
                                & \multirow{3}{*}{
                                $\cdots 2 (32)^{\frac{a-r-5}{2}} (01)^{\frac{r+3}{2}} 2 ((01)^{\frac{a}{2}-1}2)^3 (01)^{\frac{r+3}{2}}$
                                }
                                            & non-expandable
                                            \\ \cline{1-1} \cline{5-5}
$3 <r < a-5$&         & 
                                &
                                            & \multirow{3}{*}{
                                             $\{ 1, a,b, b+2, a+b+1, a+2b+2 \}$
                                             }
                                             \\ \cline{1-1} \cline{3-3}
$5 \leq r = a-5$&     & $a-1$
                                &
                                            & 
                                            \\ \cline{1-1} \cline{3-4}
$5 \leq r = a - 3$ &         & \multirow{2}{*}{
                                $b+1$
                                }       & \multirow{2}{*}{
                                          $\cdots 2 (01)^{\frac{a}{2}-1} (2 (01)^{\frac{a}{2}})^2 2 (01)^{\frac{a}{2}-1}$
                                          }
                                            & 
                                            \\ \cline{1-1} \cline{5-5}
$5 \leq r = a-1$   &         &          &
                                            & $\{1,a,b\} \cup \{ b+2, a+b+1\}^{\ast p}$  \\ \hline
\end{tabular}
\begin{table}[ht]
\caption{Subtraction games $\S(1,a,b)$ with $a$ being even and $b = 3a+r$.} \label{T.2.3.3}
\end{table}


\begin{tabular}{|c|c|c|c|c|}\hline
Subtraction set    & $k$    & period $p$     & periodic nim-values   & expansion set \\ \hline
\begin{tabular}{c}
\{a,b,a+b\} \\
$a \geq 2$ \\
$b = ka+r$ \\
$0 < r < a$
\end{tabular}
                   & odd     & $a+2b-r$   & $(0^a1^a)^{\frac{k+1}{2}}(2^a3^a)^{\frac{k-1}{2}}2^a3^r$
                                                  & \begin{tabular}{c}
                                                    $\big{\{} \{sa: s \text{ is odd}, s \leq k\}$ \\
                                                        $\cup \{ ka+1, ka+2, \ldots, b+a \}$ \\
                                                        $\cup \{b+sa: s \text{ is odd}, s \leq k \} \big{\}}^{\ast p}$
                                                    \end{tabular} \\  \cline{2-5}
                   & even    & $(2b+r)a$   & unknown   & unknown \\ \hline
\end{tabular}
\begin{table}[ht]
\caption{Subtraction games $\S(a,b,a+b)$.} \label{T.2.4}
\end{table}
\end{center}
\end{landscape}


\begin{landscape}
\begin{center}
\begin{tabular}{|c|c|c|c|c|c|c|}\hline
\begin{tabular}{c}
Subtraction set \\
$\{a,b,b+a+1\}$ \\
$a \geq 4$\\
$b = 2a+r$ \\
$r \leq 2$
\end{tabular}
     & period $p$  & periodic nim-values
                                                                 & expansion set \\ \hline
$\{a,2a,3a+1\}$
                    & \multirow{3}{*}{
                    $b+2a+1$
                    }        & $\cdots 1 0^{a-1} 2 1^{a-1} 0 2^{a-1} 1 0 3^{a-2} 2$
                                                                & $\{a\} \cup \{2a,3a+1, 5a \}^{\ast p}$
                                                                \\ \cline{1-1} \cline{3-4}
$\{a,2a+1,3a+2\}$   &      & $0^a 1^a 0 2^{a-1} 10 3^{a-2} 21$
                                                & non-expandable  \\ \cline{1-1} \cline{3-4}
\{a,2a+2,3a+3\}
                    &
                                        & $\cdots 2^{a-2} 11 00 3^{a-4} 22 11 0^{a-1} 2 1^{a-1} 00$
                                                &   \begin{tabular}{c}
                                                    $\{a,2a+2,3a+3\} \cup \{3a+4\}^{\ast p}$ \\
                                                    and non-expandable if $a = 4$\\
                                                    \end{tabular}   \\ \hline
\end{tabular}
\begin{table}[ht]
\caption{Subsection games $\S(a,b,b+a+1)$ with $a \geq 4$, $b = 2a+r$, and $0 \leq r \leq 2$.} \label{T.2.6}
\end{table}


\begin{tabular}{|c|c|c|c|c|}\hline
\begin{tabular}{c}
Subtraction set \\
$\{a,b,b+2ja\}$
\end{tabular}      & period $p$ & $k$     & periodic nim-values  & expansion set \\ \hline
\multirow{2}{*}{
\begin{tabular}{c}
\{a,b,b+2a\}
\end{tabular}
}
                    & \multirow{2}{*}{
                    $a+b$
                    }  & even   & $(0^a1^a)^{\frac{k}{2}} 0^r2^{a-r} 1^r$
                                                & \multirow{2}{*}{
                                                non-expandable
                                                }   \\ \cline{3-4}

                    &  & odd  & $(0^a1^a)^{\frac{k+1}{2}} 2^r$
                                                &      \\ \hline
\multirow{2}{*}{
\begin{tabular}{c}
\{a,b,b+4a\}\\
$k \geq 3$
\end{tabular}
}
                    & \multirow{2}{*}{
                    $2b+4a$
                    }
                       & even  & $((0^a1^a)^{\frac{k}{2}} 0^r2^{a-r} 1^r )^2 3^{a-r} 0^r 2^{a-r} 1^r$

                                                &  \multirow{2}{*}{
                                                 \begin{tabular}{c}
                                                    $\{a ,b, b+2a, b+4a,$ \\
                                                    $  2b+3a \}^{\ast p}$
                                                    \end{tabular}
                                                 } \\  \cline{3-4}
          &        & odd   & $((0^a1^a)^{\frac{k+1}{2}} 2^r)^2 0^{a-r} 3^r 1^{a-r} 2^r$ &
                                                    \\ \hline

\begin{tabular}{c}
\{a,b,b+6a\}\\
$k \geq 3$, $r = 1$
\end{tabular}
                    & $2b+6a$  & odd  & $((0^a1^a)^{\frac{k+1}{2}} 2)^2 (0^{a-1}3 1^{a-1}2)^2$
                                                &   \begin{tabular}{c}
                                                   $\{ a, b, b+2a, b+4a,$ \\
                                                   $b+6a, 2b+5a \}^{\ast p}$
                                                    \end{tabular}   \\ \hline
\end{tabular}
\begin{table}[ht]
\caption{Subtraction games $\S(a,b,b+2ja)$ with $a \geq 2$, $b = ka+r$.} \label{T.2.5}
\end{table}
\end{center}
\end{landscape}

\begin{landscape}
\begin{center}
\begin{tabular}{|c|c|c|c|c|l|}\hline
\multicolumn{2}{|c|}{$k$}                              & $i$       & $t$  & $n$       & $\G(n+s) = \G(n)$       \\ \hline
\multirow{2}{*}{even}    & \multicolumn{1}{|c|}{$k<t$} &           &      & $a+b$     & $\G(a+b+ka+i) = \G(ka+i) = 0 = \G(0) = \G(a+b)$  \\ \cline{2-6}
      & \multicolumn{1}{|c|}{$k=t$} & $i<r$     &      & $(t-1)a$  & \begin{tabular}{c}
                                                                  $\G((t-1)a+ta+i) = \G((t-2)a-(r+i)+ta+a+r)$ \\
                                                                  $= \G((t-2)a-(r+i)+a+b) = \G((t-2)a-(r+i)) = 1 = \G((t-1)a)$
                                                                  \end{tabular}   \\ \hline
\multirow{8}{*}{odd}    & $k=1$ & $i>0$     &      & $a-1$     & $\G(a-1+a+i) = \G(2a+i-1) = 0 = \G(a-1)$ \\ \cline{2-6}
    & \multirow{7}{*}{$k>1$}    & $i\leq r$ & \multirow{3}{*}{even}      & $b-a-i$   & $\G(b-a-i+ka+i) = \G(a+b+(k-2)a) = \G((k-2)a) = 1 = \G(b-a-i)$ \\ \cline{3-3} \cline{5-6}
    &       & $i>r=0$   &      & $b-a$     & $\G(b-a+ka+i) = \G(a+b+(k-2)a+i) = \G((k-2)a+i) = 1 
    = \G(b-a)$  \\ \cline{3-3} \cline{5-6}
    &       & $i>r>0$   &      & $a-i+r-1$     & $\G(a-i+r-1+ka+i) = \G\big((k+1)a+r-1\big) = 0
= \G(a-i+r-1)$ \\ \cline{3-6}
    &       & $i<r$     & \multirow{4}{*}{odd}      & $ta$      & $\G(ta+ka+i) 
    = \G\big((k-1)a-(r-i)\big) = 1
    = \G(ta)$  \\ \cline{3-3} \cline{5-6}
    &       & $i=r = 0$ &      &           & not hold as we require $\gcd(a,b) = 1$ \\  \cline{3-3} \cline{5-6}
    &    & $i=r > 0$    &   & $a-1$     & $\G(a-1+ka+i)= \G\big((k+1)a+i-1\big) = 0 = \G(a-1)$  \\ \cline{3-3} \cline{5-6}
    &       & $i>r$     &      & $(t+1)a-1$& $\G((t+1)a-1+ka+i)= \G(ka+i-r-1) =1 = \G((t+1)a-1)$  \\ \hline
\end{tabular}
\begin{table}[ht]
\caption{$b = ta+r > 2a; a+1 \leq s = ka+i \leq b - 1$.} \label{T.s}
\end{table}
\end{center}
\end{landscape}

\restoregeometry

\bigskip
\section{Two proofs} \label{S.proof}

As outlined in Remark \ref{R.proof}, we give in this section two proofs, one for the expansion set of the subtraction set $\{a,b\}$ claimed in Theorem \ref{The-sub-2} and another one for the nim-sequence for the subtraction game $\S(1, a, b)$ claimed in Theorem \ref{1ab}.

\subsection{Proof of Theorem \ref{The-sub-2}} \label{P.2.1}

Recall that the game $\S(a,b)$ has periodic nim-values
\begin{align} \label{l0}
\begin{cases}
(0^a1^a)^{\frac{t}{2}} 0^r2^{a-r} 1^r,  &\text{if $t$ is even};\\
(0^a1^a)^{\frac{t+1}{2}} 2^r,           &\text{otherwise}. 
\end{cases}
\end{align}
in which $b = ta+r$ \cite[p. 530]{ww1}. We now prove the expansion set. The proof requires checking several cases, using Theorem \ref{ext.form}. The outline of the proof is as follows.
\begin{enumerate} [(a)] \itemsep0em
\item For each of the three cases $a = 1$, $b = a+1$, and $b > 2a$, we show that for every positive integer $s$, there exists a nonnegative integer $n$ such that $\G(n+s) = \G(n)$ and so the subtraction set $\{a, b\}$ is non-expandable.
\item For $a+1 < b \leq 2a$, we show that $\G(n+s) \neq \G(n)$ for all nonnegative integers $n$ if and only if $s \in \{a,a+1, \ldots, b\}^{\ast (a+b)} = X$ and so the subtraction set $\{a, b\}$ is expandable to $X$.
\end{enumerate}
By Theorem \ref{ext.form}, in the two cases \textrm{(a)} and \textrm{(b)} we only need to examine $s < a+b$ with $s \notin \{a, b\}$.

\begin{enumerate} [(a)] \itemsep0em
\item We first consider the case where $a = 1$ or $b = a+1$ or $b > 2a$.
    \begin{enumerate} [(i)] \itemsep0em
    \item Consider the case $a = 1$. We only examine the case $b$ is even. Let $s \leq a+b-1$ such that $s \notin\{1, b\}$. Recall from (\ref{l0}) that the first $b$ nim-values $\G(n)$ (for $n$ from $0$ to $b-1$) are $(01)^{\frac{b}{2}}$. In particular,
        \begin{align} \label{l1}
        \G(n) = 1 \quad  \text{if $n \leq b-1$ and $n$ is odd}.  
        \end{align}
        \begin{enumerate} [$\bullet$] \itemsep0em
        \item If $s$ is even then $s \leq b-2$ as $s < b$ and $b$ is even, implying that $s+1$ is odd and $s+1 \leq b-1$. Let $n = 1$. By (\ref{l1}), we have $\G(1+s) = 1 = \G(1)$.
        \item If $s$ is odd, let $n = b-1$. As $s-2$ is odd and $1 \leq s-2 \leq b-3$, $\G(s-2) = 1 = \G(b-1)$  by (\ref{l1}). Also note that $\G(b+1+s-2) = \G(s-2)$ since $p = b+1$. Therefore,
        $\G(b-1+s) = \G(b+1+s-2) = \G(s-2) = \G(b-1).$
        \end{enumerate}
    \item Consider the case $b = a+1$. We examine two intervals for $s \leq a+b-1$:
        \begin{align*}
        \begin{cases}
        1 \leq s \leq a-1, \\
        a+2 \leq s \leq a+b-1.
        \end{cases}
        \end{align*}
        Recall that the first $4a+2$ nim-values $\G(n)$ (for $n$ from $0$ to $4a+1$) is $0^a1^a20^a1^a2$. In other words, for $n \leq 4a+1$, we have
        \begin{align}  \label{l2}
        \G(n) =
        \begin{cases}
        0, \text{if either $0 \leq n \leq a-1$ or $2a+1 \leq n \leq 3a$}  \\ 
        1, \text{if either $a \leq n \leq 2a-1$ or $3a+1 \leq n \leq 4a$}   \\
        \end{cases}
        \end{align}
        \begin{enumerate} [$\bullet$] \itemsep0em
        \item For $1 \leq s \leq a-1$, choose $n = a$. Then $a+1 \leq a+s \leq 2a-1$. By (\ref{l2}), $\G(a+s) = 1 = \G(a)$.
        \item For $a+2 \leq s \leq a+b-1$, choose $n = a-1$. Then
        $0 \leq a-1+s-(a+b) \leq a-2$ and so
        $\G(a-1+s-(a+b)) = 0 = \G(a-1)$ by (\ref{l2}). By the periodicity, $\G(a-1+s) = \G(a-1+s-(a+b))$. Therefore, $\G(a-1+s) = \G(a-1)$.
        \end{enumerate}
        \item Consider the case $b > 2a$. We examine three intervals for $s \leq a+b-1$:
        \begin{align*}
        \begin{cases}
        1 \leq s \leq a-1 \\
        a+1 \leq s \leq b-1 \\
        b+1 \leq s \leq a+b-1.
        \end{cases}
        \end{align*}
        \begin{enumerate} [$\bullet$] \itemsep0em
        \item Consider the case $1 \leq s \leq a-1$. Let $n = a$. Recall that $\G(m) = 1$ for $a \leq m \leq 2a-1$ by (\ref{l0}) and so $\G(a+s) = 1 = \G(a)$ as $a+1 \leq a+s \leq 2a-1$.
        \item Consider the case $a+1 \leq s \leq b-1$. We separate this case into many subcases in Table \ref{T.s}. It can be checked that for each $s$ such that $a+1 \leq s \leq b-1$, there exists $n$, as shown in Table \ref{T.s}, such that $\G(n+a) = \G(n)$. We leave the calculation to the reader.
        \item Consider the case $b+1 \leq s \leq a+b-1$. We have $0 \leq s-b-1 \leq a-2$. Let $n = a-1$. Recall that $\G(m) = 0$ for $0 \leq m \leq a-2$ by (\ref{l0}), and so $\G(a-1+s-(a+b)) = \G(s-b-1) = 0 = \G(a-1)$. By the periodicity, $\G(a-1+s) = \G(a-1+s-(a+b))$. Therefore, $\G(a-1+s) = \G(a-1)$.
        \end{enumerate}
    \end{enumerate}
\item We now consider the case  $a+1 < b \leq 2a$. We examine three intervals for $s$:
        \begin{align*}
        \begin{cases}
        1 \leq s \leq a-1, \\
        a < s < b, \\
        b+1 \leq s \leq a+b-1.
        \end{cases}
        \end{align*}
     The case $1 \leq s \leq a-1$ and the case $b+1 \leq s \leq a+b-1$ can be treated similarly to the case \textrm{(a).(iii)}.

     Consider the case $a < s < b$. We show that $\G(n+s) \neq \G(n)$ for all $n$. Recall from (\ref{l0}) that the game $\S(a,b)$ with $a+1 < b \leq 2a$ has the periodic nim-values
    \begin{align*}
    \begin{cases}
    0^a 1^a 2^a,     &\text{if $b = 2a$};\\
    0^a 1^a 2^ r,    &\text{if $b=a+r$}.
    \end{cases}
    \end{align*}
    It can be checked that if $\G(n)$ is in a block $0's$, then $\G(n+s)$ is either in a block $1's$ or in a block $2's$; if $\G(n)$ is in a block $1's$, then $\G(n+s)$ is either in a block $2's$ or in a block $0's$; if $\G(n)$ is in a block $2's$, then $\G(n+s)$ is either in a block $0's$ or in a  block $1's$. Therefore, $\G(n+s) \neq \G(n)$ for all $s$ and $n$ such that $a < s < b$ and $n \geq 0$.
\end{enumerate}


\subsection{Proof of Theorem \ref{1ab}} \label{P.2.2}

We will give the proof for the nim-sequence only. The expansion set can be verified by using a similar technique in Theorem \ref{The-sub-2}.

We will show that the first $a+2b$ nim-values $\G(n)$, for $0 \leq n \leq a+2b-1$, are
\[ (01)^{\frac{b}{2}} (23)^{\frac{a-1}{2}}2 (01)^{\frac{b}{2}}\]
and so the condition $\G(n+a+b) = \G(n)$ holds for $0 \leq n \leq b-1$. Lemma \ref{per.sub} then implies that the subtraction game $\S(1,a,b)$ is purely periodic with the period length $a+b$ and periodic nim-values
\[(01)^{\frac{b}{2}} (23)^{\frac{a-1}{2}}2.\]

We divide the first $a+2b$ nim-values $\G(n)$, for $n$ from $0$ to $a+2b-1$, into three blocks: 
\begin{enumerate} \itemsep0em
\item $\G(0)\ldots \G(b-1)$, 
\item $\G(b)\ldots \G(a+b-1)$, 
\item $\G(a+b)\ldots \G(a+2b-1)$. 
\end{enumerate}

It can be checked by induction on $n$ that the block $(1)$ forms the sequence $(01)^{\frac{b}{2}}$, the block $(2)$ forms the sequence $(23)^{\frac{a-1}{2}}2$, and the block $(3)$ forms the sequence $(01)^{\frac{b}{2}}$. Thus the first $a+2b$ nim-values are
$(01)^{\frac{b}{2}} (23)^{\frac{a-1}{2}}2 (01)^{\frac{b}{2}}$, as required.


\section{More on ultimately bipartite subtraction games} \label{S.ubs}

Some ultimately bipartite games were exhibited in \cite{bi}. In particular the games with subtraction sets $\{3,5,2^k+1\}$, for  $k\geq 3$, and  $\{a,a+2,2a+3\}$, for odd $a\geq 3$, are all ultimately bipartite.  In this section, we first introduce another family of ultimately bipartite games with 3-element subtraction sets. We then consider the expansion sets of  subtraction sets of ultimately bipartite games, with a conjecture being given at the end.


\begin{theorem} \label{U.B}
For odd integer $a \geq 5$, the subtraction game $\S(a,2a+1,3a)$ is ultimately bipartite with pre-period length $2a^2-a-1$. The pre-periodic nim-values are shown in Table \ref{table-ub}.
\end{theorem}

\begin{proof} Let $n_0 = 2a^2-a-1$. We show that for $n \geq n_0$, $\G(n) = 0$ if $n$ is even and $\G(n) = 1$ otherwise.
\begin{enumerate}[(1)] \itemsep0em
\item We place the sequence of first $n_0$ nim-values $\G(0)$, $\G(1)$, \ldots, $\G(n_0-1)$ in Table \ref{table-ub} from left to right and top to bottom with empty cell being empty and the cell with the notation $\underset{b}{\underbrace{}}$ containing $b$ consecutive integers and so on. The nim-values in Table \ref{table-ub} can be checked by induction on $n$ for $n < n_0$.

    (Each row from the second to the second last of Table \ref{table-ub} has length $4a+2$. Observe that on each of these rows, there are exactly $a$ numbers in each two consecutive blocks A and B, B and C, C and D, F and G, G and H.)

\item We next prove that, after the first $n_0$ nim-values in Table \ref{table-ub}, the game has alternate nim-values $0,1,0,1,\ldots$ (starting from $\G(n_0)$.) The proof consists of six steps.
    \begin{enumerate} [(a)] \itemsep0em
    \item Let $A_1$ be the set of all nonnegative integers $n < n_0$ such that $\G(n) = 0$ in Table \ref{table-ub}. Let $A_2$ be the set of all even integers $n  \geq n_0$. Set $\A = A_1 \cup A_2$.
    \item We then prove the two facts:
        \begin{enumerate} \itemsep0em
        \item for every position $n$ in $\A$, all the moves from $n$ do not terminate in $\A$,
        \item for every position $n$ not in $\A$, there is a move from $n$ that terminates in $\A$.
        \end{enumerate}
    \item It follows from step \textrm{(b)} and the definition of nim-values that $\A$ is the set of positions whose nim-values are zero, by which we have
        \begin{enumerate} \itemsep0em
        \item [\textrm{(iii).}] $\G(n) = 0$ for all even $n$ such that $n \geq n_0$.
        \end{enumerate}
    \item For all odd $n \geq n_0+3a$, we have $n-a$, $n-(2a+1)$, and $n-3a$ are all in $\A$. By \textrm{(iii)} and the definition of nim-values, we have
        \begin{enumerate} \itemsep0em
        \item [\textrm{(iv).}] $\G(n) = 1$ for all odd $n$ such that  $n \geq n_0+3a$.
        \end{enumerate}
    \item We now examine $\G(n)$ for odd $n$ such that $n_0 \leq n \leq n_0+3a-1$. The $3a$ nim-values $\G(n)$ for $n_0 \leq n \leq n_0+3a-1$ continuing those in Table \ref{table-ub} is $0101\ldots01$ (the last $3a$ values in Table \ref{table-ub1}, being located in the bottoms of blocks F, G, H, I, and B), by which we have
        \begin{enumerate} \itemsep0em
        \item [\textrm{(v).}] $\G(n) = 1$ for all odd $n$ such that $n_0 \leq n \leq n_0+3a-1$.
        \end{enumerate}
    \item It follows from \textrm{(iii)}, \textrm{(iv)}, and \textrm{(v)} that the game ultimately has alternate nim-values $0,1,0,1,\ldots$ with pre-period length $n_0$.
    \end{enumerate}
     Step \textrm{(a)} is the definition of $\A$. Steps \textrm{(c)}, \textrm{(d)}, and \textrm{(f)} are straightforward. Steps \textrm{(b)} and \textrm{(e)} can be verified by induction on $n$. Checking several cases is required but is also straightforward and we leave the calculation to the reader.
\end{enumerate}
\end{proof}

\begin{remark}
The relevance of Theorem \ref{U.B}, beyond its general interest, is that it indicates that the ultimately bipartite case, which is the simplest form of ultimate periodicity, can nevertheless entail a high level of complexity at the beginning of the sequence.
\end{remark}


\begin{landscape}
\begin{center}
\begin{tabular}{||c|c||c|c||c||c|c||c|c||}\hline
{\small Block A}                             & {\small Block B}                        & {\small Block C}                          &{\small Block D}                            &{\small Block E}
                &{\small Block F}                            &{\small Block G}                          &{\small Block H}                              &{\small Block I} \\ \hline
                                    &                                &                                  &                                   &
                & $\underset{a}{\underbrace{000\ldots000}}$     &                     & $\underset{a}{\underbrace{111\ldots111}}$         & $0$    \\ \hline
$\underset{a-1}{\underbrace{22\ldots22}}$     & $1$  & $\underset{a-1}{\underbrace{33\ldots33}}$  & $0$     & $2$
                & $\underset{a-2}{\underbrace{00\ldots00}}$   & $10$   & $\underset{a-2}{\underbrace{11\ldots11}}$     & $010$    \\ \hline
$\underset{a-3}{\underbrace{2\ldots2}}$     & $101$  & $\underset{a-3}{\underbrace{3\ldots3}}$  & $010$ & $2$
                & $\underset{a-4}{\underbrace{0\ldots0}}$   & $1010$   & $\underset{a-4}{\underbrace{1\ldots1}}$     & $01010$    \\ \hline
$\vdots$                            & $\vdots$                       & $\vdots$                         & $\vdots$                          & $\vdots$
                & $\vdots$                          & $\vdots$                        & $\vdots$ & $\vdots$    \\ \hline
$2222$       & $\underset{a-4}{\underbrace{101\ldots01}}$ & $3333$         & $\underset{a-4}{\underbrace{010\ldots10}}$   & $2$
                & $000$     & $\underset{a-3}{\underbrace{10\ldots10}}$ & $111$       & $\underset{a-2}{\underbrace{010\ldots10}}$    \\ \hline
$22$       & $\underset{a-2}{\underbrace{1010\ldots101}}$ & $33$   & $\underset{a-2}{\underbrace{0101\ldots010}}$ & $2$
                &      &  &       &     \\ \hline
\end{tabular}
\begin{table}[ht]
\caption{The arrangement of the first $(2a+1)(a-1)+3a$ nim-values. $\G(n) = 0$ for $n \in \A$.} \label{table-ub}
\end{table}

\begin{tabular}{||c|c||c|c||c||c|c||c|c||}\hline
{\small Block A}                             & {\small Block B}                        & {\small Block C}                          &{\small Block D}                            &{\small Block E}
                &{\small Block F}                            &{\small Block G}                          &{\small Block H}                              &{\small Block I} \\ \hline
$\vdots$                            & $\vdots$                       & $\vdots$                         & $\vdots$                          & $\vdots$
                & $\vdots$                          & $\vdots$                        & $\vdots$ & $\vdots$    \\ \hline
$2222$       & $\underset{a-4}{\underbrace{101\ldots01}}$ & $3333$         & $\underset{a-4}{\underbrace{010\ldots10}}$   & $2$
                & $000$     & $\underset{a-3}{\underbrace{10\ldots10}}$ & $111$       & $\underset{a-2}{\underbrace{010\ldots10}}$    \\ \hline
$22$       & $\underset{a-2}{\underbrace{1010\ldots101}}$ & $33$   & $\underset{a-2}{\underbrace{0101\ldots010}}$ & $2$
                & {\bf$0$}     & $\underset{a-1}{\underbrace{101\ldots010}}$ & $1$       & $\underset{a}{\underbrace{0101\ldots010}}$    \\ \hline
                                      & $\underset{a}{\underbrace{10101\ldots0101}}$ &
                                &     &                 &      &  & &    \\ \hline
\end{tabular}
\begin{table}[ht]
\caption{The bottom part of Table \ref{table-ub} with extra $3a$ nim-values at the end.} \label{table-ub1}
\end{table}
\end{center}
\end{landscape}

We now give three partial results that place limitations on the  expansion sets of subtraction sets of ultimately bipartite games. Note that $S = \{s_1, s_2, \ldots, s_k\}$. First we recall a well known result.

\begin{lemma} [Ferguson's property] \label{fer} \cite[p. 86]{ww1}
$\G(n) = 0$ if and only if $\G(n+s_1) = 1$ for all $n$.
\end{lemma}


\begin{lemma} \label{n_0start}
Let $\S$ be an ultimately bipartite game. Then $n_0 \geq s_1+s_k$.
\end{lemma}

\begin{proof}
Note that $s_1 > 1$ by Theorem \ref{bip}. Consider the first $s_1+s_k$ nim-values $\G(0), \G(1), \ldots, \G(s_1+s_k-1)$. Note that the first $s_1$ nim-values of the subsequence are all zero and so the last $s_1$ nim-values of the subsequence are non-zero as $\G(m+s_k) \neq \G(m)$ for all $m \leq s_1-1$. Therefore, the periodicity must start at some $n_0 \geq s_1+s_k$.
\end{proof}


\begin{lemma} \label{n_0-1}
Let $\S$ be an ultimately bipartite game. Then $\G(n_0) = 0$, $\G(n_0-1) \geq 2$, $\G(n_0-2) = 0$, and  $\G(n_0-1-s_k) = 1$.
\end{lemma}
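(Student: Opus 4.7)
The plan is to combine Ferguson's Property with the minimality of $n_0$ and a parity argument on $s_1$. I would first record three preliminary observations: (i) $s_1$ is odd, since for every $n\geq n_0$ both $\G(n)$ and $\G(n+s_1)$ lie in $\{0,1\}$ and are forced by Ferguson to differ; (ii) $s_1\geq 3$, because $s_1=1$ together with $\gcd(S)=1$ would give a purely bipartite game by Theorem~\ref{bip}, contradicting $n_0>0$; and (iii) $\G(n_0)\in\{0,1\}$, since $n_0$ lies in the periodic block of alternating $0,1$'s.

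For the first three claims, the arguments are essentially parity plus Ferguson. For $\G(n_0)=0$: if instead $\G(n_0)=1$, then $s_1-1$ being even puts $n_0-1+s_1\geq n_0$ at the same parity as $n_0$, so $\G(n_0-1+s_1)=\G(n_0)=1$, whence Ferguson forces $\G(n_0-1)=0=\G(n_0+1)$, contradicting minimality. For $\G(n_0-1)\geq 2$: minimality rules out $\G(n_0-1)=1$, and $\G(n_0-1)=0$ would by Ferguson demand $\G(n_0-1+s_1)=1$, contradicting $\G(n_0-1+s_1)=\G(n_0)=0$. For $\G(n_0-2)=0$: Ferguson gives $\G(n_0-2)=0\iff\G(n_0-2+s_1)=1$, and $s_1-2$ being odd puts $n_0-2+s_1$ at parity opposite to $n_0$, so $\G(n_0-2+s_1)=\G(n_0+1)=1$.

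The fourth claim $\G(n_0-1-s_k)=1$ is more delicate. From the previous claim, the mex set $\{\G(n_0-1-s_i)\}$ contains both $0$ and $1$; the same Ferguson step used to obtain $\G(n_0-1)\geq 2$ shows $\G(n_0-1-s_1)\neq 0$; and $s_k\in S\subseteq S^{ex}$ gives $\G(n_0-1-s_k)\neq\G(n_0-1)\geq 2$. What remains is to exclude $\G(n_0-1-s_k)=0$ and every value $\geq 3$, which I would reduce via Ferguson to the identity $\G(n_0-1-s_k-s_1)=0$ (first sharpening Lemma~\ref{n_0start} to the strict inequality $n_0>s_1+s_k$ by inspecting the mex decomposition of $\G(n_0-1)$, so that the index is genuinely nonnegative). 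The target identity $\G(n_0-1-s_k-s_1)=0$ should then follow by an induction that alternately applies Ferguson and the mex constraints forced by $\G(n_0)=\G(n_0-2)=0$, terminating in the trivial block $\G(0)=\cdots=\G(s_1-1)=0$. I expect this induction to be the main obstacle, since it goes beyond the parity bookkeeping that suffices for the first three claims.
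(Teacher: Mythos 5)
Your arguments for the first three claims are correct and essentially the paper's own (parity of $s_1$ plus Ferguson's property plus minimality of $n_0$); the only cosmetic difference is that the paper first normalises to a position $m$ with $\G(m)=0$ rather than arguing by contradiction. One small repair: to rule out $s_1=1$ via Theorem \ref{bip} you need all elements of $S$ to be odd, not just $\gcd(S)=1$; this does hold, by the same argument as your observation (i) applied to every $s_i$ (a legal move connects $n$ and $n+s_i$, so their nim-values differ in the periodic region), and you will need "all $s_i$ odd" again below.

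The genuine gap is the fourth claim, $\G(n_0-1-s_k)=1$, which is the part actually used later (Lemma \ref{limit-s}). Your reduction to $\G(n_0-1-s_k-s_1)=0$ is circular: by Ferguson's property that identity is \emph{equivalent} to $\G(n_0-1-s_k)=1$, so nothing has been gained, and the "induction alternately applying Ferguson and the mex constraints" down to the block $\G(0)=\cdots=\G(s_1-1)=0$ is not carried out (you acknowledge it is the main obstacle); it also forces you to establish the strict bound $n_0>s_1+s_k$, which is not needed at all. The missing idea is a translation into the periodic region by $s_k$ applied to the \emph{other} options of $n_0-1$. Write $\G(n_0-1)=\mathrm{mex}\{\G(n_0-1-s_i):1\leq i\leq k\}$; since $\G(n_0-1)\geq 2$, the value $1$ occurs among these options. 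For each $i<k$ consider $n_0-1-s_i+s_k$: it is $\geq n_0$ and lies at odd offset $s_k-s_i-1$ from $n_0$ (here all $s_i$ odd is used), so $\G(n_0-1-s_i+s_k)=1$ by the alternating pattern; on the other hand $s_k$ is a legal move from $n_0-1-s_i+s_k$ to $n_0-1-s_i$, so $\G(n_0-1-s_i)\neq\G(n_0-1-s_i+s_k)=1$. Hence no $i<k$ can supply the value $1$, and therefore $\G(n_0-1-s_k)=1$; the weak bound $n_0\geq s_1+s_k$ from Lemma \ref{n_0start} already guarantees $n_0-1-s_k\geq 0$, so the argument closes without any further induction.
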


\begin{proof}
For the moment let $m=n_0$ if $ \G(n_0) = 0$ and set $m=n_0+1$ if $ \G(n_0) =1$, so that $\G(m) = 0$.  We have
\begin{align} \label{n_0}
\G(m + i) =
\begin{cases}
0, &\text{if $i$ is even};\\
1, &\text{otherwise}.
\end{cases}
\end{align}
Therefore, $\G(m-2+s_1) = 1$ and hence by Ferguson's property, $\G(m-2) = 0$. Notice that $\G(m-1) \neq 1$, as otherwise  the periodicity would commence at $\G(m-2)$ or earlier. In particular, $\G(n_0) = 0$ and so $m=n_0$. Also by Ferguson's property, $\G(n_0-1) \neq 0$, as otherwise $\G(n_0-1+s_1) = 1$ contradicting (\ref{n_0}). So $\G(n_0-1) \geq 2$. We now show that $\G(n_0-1-s_k) = 1$. Note that
\[\G(n_0-1) = mex\{\G(n_0-1-s_i): 1\leq i\leq k\}.\]
For each $i < k$, we have $\G(n_0-1-s_i) \neq \G(n_0-1-s_i+s_k)$. Moreover, by (\ref{n_0}), we have $\G(n_0-1-s_i+s_k) = 1$. Therefore, $\G(n_0-1-s_i) \neq 1$ for all $i < k$. It follows that $\G(n_0-1-s_k) = 1$.
\end{proof}


\begin{lemma} \label{limit-s}
Let $\S$ be an ultimately bipartite game. If $s$ can be adjoined to $S$, then $s < s_k$.
\end{lemma}

\begin{proof}
Assume by contradiction that $s > s_k$ and $s$ can be adjoined to $S$. Note that all $s_i$ are odd (as the nim-sequence is ultimately $01\ldots01$) and greater than one. We also have $s > 1$ and $s$ is odd. By Lemma \ref{n_0start}, we have $n_0 \geq s_1+s$. By Lemma \ref{n_0-1}, $\G(n_0-1-s_k) = 1$ and so $\G(n_0-1-s_k+s) \neq 1$. This contradicts (\ref{n_0}) as $s-1-s_k$ is odd.
\end{proof}


\begin{conjecture} \label{Con1}
The subtraction set of an ultimately bipartite game is non-expandable.
\end{conjecture}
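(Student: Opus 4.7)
The plan is to reduce Conjecture \ref{Con1} to a sharp statement about the pre-period boundary and then outline an attack there. Write $S=\{s_1<\cdots<s_k\}$; since $\S(S)$ is ultimately bipartite, the period is $p=2$. Let $n_0$ be the pre-period. If $n_0=0$ the game is purely bipartite, and Theorem \ref{bip} yields $S^{ex}=S^{*2}$ (the set of all odd positive integers), so $S$ is non-expandable by the definition in the introduction. So assume $n_0>0$. Every $s_i\in S$ must be odd: otherwise the equality $\G(n+s_i)=\G(n)$ on the bipartite tail would contradict the fact that an $s_i$-move joins positions of distinct nim-value. The same tail argument rules out any even $s\in S^{ex}$. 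Lemma \ref{limit-s} restricts $S^{ex}\subseteq\{1,\ldots,s_k\}$, and Theorem \ref{bip} forces $s_1\geq 3$ in the non-purely-bipartite case. For odd $s$ with $0<s<s_1$, both $0$ and $s$ lie below $s_1$, so $\G(0)=\G(s)=0$ and $s\notin S^{ex}$. Since $s_k\in S$, the one remaining case is odd $s$ with $s_1<s<s_k$ and $s\notin S$.

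In this case the natural candidate is $n=n_0-1-s$. By Lemma \ref{n_0start}, $n_0\geq s_1+s_k$, so $n\geq s_1+s_k-1-s>0$; by construction $n+s=n_0-1$, and Lemma \ref{n_0-1} gives $\G(n+s)=\G(n_0-1)\geq 2$. The conjecture therefore reduces to the key claim
\[\G(n_0-1-s)=\G(n_0-1)\quad\text{for every odd }s\in(s_1,s_k)\setminus S.\]
This claim is consistent with the ultimately bipartite examples known so far: for instance, in $\S(3,5,9)$ one computes $n_0=14$ with $\G(13)=2$, and the only odd $s\in(3,9)\setminus S$ is $s=7$, for which indeed $\G(6)=2$.

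The attack on the key claim is by contradiction combined with the mex recursion. Suppose $s\in S^{ex}\setminus S$ is an odd counterexample in $(s_1,s_k)$. Adding $s$ to $S$ leaves the entire nim-sequence unchanged, so at every position $n$ the mex of $\{\G(n-s_i):s_i\in S\}$ must remain unchanged after inserting $\G(n-s)$; in particular $\G(n_0-1-s)\neq\G(n_0-1)$. Combining this with Lemma \ref{n_0-1} (which pins $\G(n_0-1-s_k)=1$, $\G(n_0-1)\geq 2$, $\G(n_0-2)=0$, and $\G(n_0)=0$) and Ferguson's property (which, applied at $n_0-1-s_1$, forces $\G(n_0-1-s_1)\neq 0$), one obtains a system of mex and parity constraints across the positions $n_0-2,\,n_0-1,\,n_0$ and their $s$-shifts. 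The plan is then to propagate these constraints backward through the pre-period, using the bipartite tail $\G(n_0+j)=j\bmod 2$ to control the higher nim-values appearing below $n_0$, until a contradiction is forced.

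The principal obstacle is precisely this last step: pre-periods of ultimately bipartite games may carry arbitrary higher nim-values, and the mex recursion alone yields only soft (disjunction-style) constraints on what values can appear where. A complete proof will likely require either an inductive backwards-analysis of the pre-period that leverages the bipartite tail to tightly determine each preceding value, or a structural characterization of pre-periods of ultimately bipartite subtraction games analogous to the explicit families described in Section 7 and in the proof of Theorem \ref{U.B}. The reduction above isolates the combinatorial core of the problem but does not close it, consistent with the statement being presented as a conjecture.
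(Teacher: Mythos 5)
The statement you are proving is Conjecture \ref{Con1}; the paper does not prove it, offering only the partial limitations of Lemmas \ref{n_0start}, \ref{n_0-1} and \ref{limit-s}, so the only question is whether your argument actually settles it --- and, as you yourself concede, it does not. Your preliminary reductions are sound and go somewhat beyond the paper's partial results: in the purely bipartite case Theorem \ref{bip} gives $S^{ex}=S^{*2}$; in the ultimately-but-not-purely bipartite case the tail-parity argument correctly shows all $s_i$ are odd and excludes every even $s$ from $S^{ex}$; odd $s<s_1$ is excluded by the witness $\G(0)=\G(s)=0$; and $s>s_k$ is excluded by Lemma \ref{limit-s}. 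This correctly isolates the case of odd $s$ with $s_1<s<s_k$, $s\notin S$, as the whole content of the conjecture.

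The genuine gap is that this remaining case is handled only by an unproven ``key claim'' plus a programmatic sketch. Two concrete problems: first, the claim $\G(n_0-1-s)=\G(n_0-1)$ for every odd $s\in(s_1,s_k)\setminus S$ is merely \emph{sufficient} for $s\notin S^{ex}$, not equivalent to it (any witness $n$ with $\G(n+s)=\G(n)$ would do), so you may have reduced the conjecture to a strictly stronger statement that could fail even if the conjecture is true; the single computation in $\S(3,5,9)$ is evidence, not proof. Second, the proposed backward propagation of mex and parity constraints through the pre-period is never executed, and the obstruction you name --- that the pre-period of an ultimately bipartite game can carry arbitrary nim-values, so the mex recursion yields only disjunctive constraints --- is precisely why the known tools (Ferguson's property, Lemmas \ref{n_0start} and \ref{n_0-1}) stop short of a contradiction. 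So what you have is a correct reduction and a reasonable plan of attack, but not a proof; the statement remains a conjecture, exactly as the paper presents it.
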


\bigskip

\section*{Acknowledgements}
I am grateful to Dr. Grant Cairns, for several suggestions regarding content and exposition.

 \bibliographystyle{plain}

\begin{thebibliography}{14}

\bibitem{les}
M.H. Albert, R.J. Nowakowski, D. Wolfe, Lessons in play: An introduction to combinatorial game theory, second ed., A K Peters Ltd., Wellesley, MA, 2007.

\bibitem{Ingo}
I. Alth\"{o}fer, J. B\"{u}ltermann, Superlinear period lengths in some subtraction games, Theoret. Comput. Sci. 148 (1995), no. 1, 111--119.

\bibitem{ww1}
E.R. Berlecamp, J.H. Conway, R.K. Guy, Winning ways for your mathematical plays, vol. 1, second edition, A.K. Peters, Natick, MA, 2001.

\bibitem{ww3}
E.R. Berlecamp, J.H. Conway, R.K. Guy, Winning ways for your mathematical plays, vol. 3, second edition, A.K. Peters, Natick, MA, 2003.

\bibitem{bi}
G. Cairns, N.B. Ho, Ultimately bipartite subtraction games, Australas. J. Combin. 48 (2010), 213--220.

\bibitem{guy}
R.K. Guy, Fair game: How to play impartial combinatorial games, reprinted, Comap, Arlington, MA, 1991.


\end{thebibliography}

\appendix

\section{A code, written on Maple, for subtraction game} \label{A}

We include in \ref{A1} the code ``sub" whose input is $(S, n)$ where $S$ is the subtraction set and $n$ is the largest integer the code needs to calculate $\G(n)$ to identify the pre-period length $n_0$ and period length $p$. If $n > n_0 + p + s_k$ the periodicity will be found and the code outputs pre-period length $n_0$, period length $p$, pre-periodic nim-values $\G(0)\ldots\G(n_0-1)$, periodic nim-values $\G(n_0)\ldots\G(n_0+p-1)$, and the expansion set $S^{ex}$. If the code returns nothing, it requires a larger $n$.

The code ``sub" recalls the subcode ``mex" in \ref{A2} and the subcode ``F" in \ref{A3}. In these codes, for arbitrary sets $A, B$ and element $a$, \underline{\textrm{`union`(A, B)}} means $A \cup B$, \underline{\textrm{not `in`(a, A)}} means $a \notin A$, and \underline{\textrm{`minus`(A, B)}} means $A \setminus B$.

We provide two examples in \ref{A4}. The first example is purely periodic and so it does not have pre-periodic nim-values.

\subsection{The code ``sub"} \label{A1}
\begin{verbatim}
sub := proc (S, n)
local i, j, k, l, m, s, g, p, T, Y, Z;
############
for i from 0 to min(S) - 1 do
     g[i] := 0;
end do;
############
for i from min(S) to n do
     T := {};
     for k in S do
        if k <= i then
            T := `union`(T, {g[i-k]});
        end if;
     end do;
     g[i] := mex(T);
end do;
############
for i from 0 to n - max(S) - 1 do
     #########
     for j from i+1 to n - max(S) do
        for l from 0 to max(S) do
            if g[i+l] <> g[j+l] then
                l := 0;
                break;
            end if;
        end do;
        if l = max(S) + 1 then
             break;
        end if;
     end do;
     #########
     if j < n - max(S) then
          p := j - i;
          ######
          print(pre_period length)
          print(i);
          ######
          print(period length);
          print(p);
          ######
          print(pre_periodic nim_values);
          if 0 < i then
               print(seq(g[x], x = 0 .. i - 1));
          end if;
          ######
          print(periodic nim_values);
          print(seq(g[x], x = i .. i + p - 1));
          ######
          if i = 0 then
               Y := S;
               for s from 1 to p-1 do
                    if not `in`(s, S) then
                        for m from 0 to i - 1 do
                            if g[m+s] = g[m] then
                                 break;
                            end if;
                        end do;
                        if m = p then
                             Y: = `union`(Y, {s});
                        end if;
                    end if;
               end do;
               ###
               print(expansion set);
               return Y^(p);
               ###
          else
               Y := S intersect F(i-1);
               Z := `minus`(S, Y);
               for s from 1 to i + p - 1 do
                    if not `in`(s, S) then
                         for m from 0 to i + p - 1 do
                              if g[m+s] = g[m] then
                                   break;
                              end if;
                         end do;
                         if m = i + p then
                              if s < i then
                                   Y:= `union`(Y, {s});
                              else
                                   Z:= `union`(Z, {s});
                              end if;
                         end if;
                    end if;
               end do;
               ###
               print(expansion set);
               return Y,Z^(p);
               ###
          end if;
          ######
          break;
     end if;
	 #########
end do;
############
end proc;
\end{verbatim}
\subsection{The subcode ``mex"}  \label{A2}
\begin{verbatim}
mex := proc (S)
local i;
if S = {} then
    return 0;
else
    for i from 0 to max(S)+1 do
        if not `in`(i, S) then
            return i;
        end if;
    end do;
end if;
end proc;
\end{verbatim}
\subsection{The subcode ``F"}  \label{A3}
\begin{verbatim}

F := proc (n)
local i, S;
S := {};
for i to n do
     S := `union`(S, {i});
end do;
return S;
end proc;
\end{verbatim}
\subsection{Two examples}  \label{A4}
\begin{verbatim}

sub({2, 7, 10}, 100);
                      pre_period length
                               0
                         period length
                               17
                    pre_period nim_values
                      periodic nim_values
       0, 0, 1, 1, 0, 0, 1, 1, 2, 0, 3, 1, 2, 0, 3, 1, 2
                         expansion set
                         {2, 7, 10}^17
sub({3, 8, 12}, 100);
                       pre_period length
                               16
                         period length
                                5
                   pre_period nim_values
         0, 0, 0, 1, 1, 1, 0, 0, 2, 1, 1, 0, 2, 2, 1, 3
                      periodic nim_values
                         0, 0, 2, 1, 1
                         expansion set
                     {3, 8, 12, 13}, {18}^5
\end{verbatim}


\section{Pre-periodic nim-values of some ultimately periodic games}  \label{B1}

\bigskip

\begin{center}
\begin{tabular}{|c|c|c|}\hline
\begin{tabular}{c}
Subtraction set \\
\{1,a,b\} \\
$a$: even\\
$b = a+r$\\
$r$: odd
\end{tabular}           & $n_0$    & pre-periodic nim-values     \\ \hline
$3 \leq r \leq a-3$     & $b2$       & $(01)^{\frac{a}{2}}2(01)^{\frac{r-1}{2}}23$ \\
\hline
\end{tabular}
\begin{table}[ht]
\caption{Pre-periodic nim-values of ultimately periodic games $\S(1,a,b)$ in Table \ref{T.2.3.1}.} \label{T.2.3.1.b}
\end{table}
\end{center}
\begin{center}
\begin{tabular}{|c|c|c|c|}\hline
\begin{tabular}{c}
Subtraction set \\
$\{1,a,b\}$  \\
$a$: even\\
$b = 2a+r$
\end{tabular}           & $r$  & $n_0$    & pre-periodic nim-values     \\ \hline
$2 = r < a-4$        & \multirow{4}{*}{even}
                               & $a+2b+4$   &
                               \begin{tabular}{c}
                         $(01)^{\frac{a}{2}} 2 (01)^{\frac{a}{2}}(23)^{\frac{a}{2}} 013$ \\
                               $(01)^{\frac{a}{2}} 2 (01)^{\frac{a}{2}-1}20123$
                               \end{tabular}
                                    \\ \cline{1-1} \cline{3-4}
$4 \leq r < a-4$

                        & &$5a+12$  & \begin{tabular}{c}
                        $((01)^{\frac{a}{2}} 2)^2 012 (32)^{\frac{a}{2}-2} (01)^22 $\\
                        $(01)^{\frac{a}{2}} 2 (01)^{\frac{a}{2}-1}2 (01)^2 23$
                        \end{tabular}
                                    \\ \cline{1-1} \cline{3-4}
$4 < r = a-4$

                        &      & $4a+b$  & \begin{tabular}{c}
                         $((01)^{\frac{a}{2}} 2)^2 (01)^{\frac{a}{2}-3} 2 (32)^2 (01)^{\frac{a}{2}-2}2 $ \\
                         $(01)^{\frac{a}{2}} 2 (01)^{\frac{a}{2}-1} 2 (01)^{\frac{a}{2}-2} 2 3$
                         \end{tabular}
                                    \\ \cline{1-1} \cline{3-4}

$4 \leq r = a-2$        &       & $b+2$  & $(01)^{\frac{a}{2}} 2 (01)^{\frac{a}{2}} 2 (01)^{\frac{a-4}{2}} 23$   \\    \hline
\end{tabular}
\begin{table}[ht]
\caption{Pre-periodic nim-values for ultimately periodic games $\S(1,a,b)$ in Table \ref{T.2.3.2}.} \label{T.2.3.2.b}
\end{table}
\end{center}

\begin{center}
\begin{tabular}{|c|c|c|c|}\hline
\begin{tabular}{c}
Subtraction set \\
\{1,a,3a+r\} \\
$a$: even
\end{tabular}
             & $r$    & $n_0$  & pre-periodic nim-values    \\ \hline
$3 = r < a-5$& \multirow{5}{*}{odd}
                      & $2a+3b+6$    & \begin{tabular}{c}
                             $((01)^{\frac{a}{2}}2)^3 (32)^{\frac{a}{2}-1}3      013 $ \\
                             $((01)^{\frac{a}{2}}2)^2 (01)^{\frac{a}{2}-1}2 012
                                                       (32)^{\frac{a}{2}-2} (01)^2 2$  \\
                             $(01)^{\frac{a}{2}}2   ((01)^{\frac{a}{2}-1}2)^2 (01)23$
                             \end{tabular}
                             \\ \cline{1-1} \cline{3-4}
$3 < r < a - 5$ &     & $10a+b-8$   & \begin{tabular}{c}
$((01)^{\frac{a}{2}}2)^3 (01)^{\frac{r-3}{2}} 2 (32)^{\frac{a}{2}-2} (01)^2 2 $ \\
$((01)^{\frac{a}{2}}2)^2 (01)^{\frac{a}{2}-1}2 (01)^2 2 (32)^{\frac{a}{2}-3} (01)^{\frac{a}{2}-3} 2$\\
  $(01)^{\frac{a}{2}}2   ((01)^{\frac{a}{2}-1}2)^2 (01)^{\frac{a}{2}-3}23$
                             \end{tabular}  \\ \cline{1-1} \cline{3-4}
$5 \leq r = a - 5$ &  & $10a+b-4$   & \begin{tabular}{c}
$((01)^{\frac{a}{2}}2)^3 (01)^{\frac{r-3}{2}} 2 (32)^{3} (01)^{\frac{a}{2}-3}2 $ \\
$((01)^{\frac{a}{2}}2)^2 (01)^{\frac{a}{2}-1}2 (01)^{\frac{a}{2}-3}2 (32)^2 (01)^{\frac{a}{2}-2} 2$\\
  $(01)^{\frac{a}{2}}2   ((01)^{\frac{a}{2}-1}2)^2 (01)^{\frac{a}{2}-2}23$
                             \end{tabular}  \\ \cline{1-1} \cline{3-4}
$5 \leq r = a - 3$ &         & $a+2b+4$   & \begin{tabular}{c}
         $((01)^{\frac{a}{2}} 2)^3 (01)^{\frac{r-3}{2}} 2 (32)^2 (01)^{\frac{r-1}{2}}2 $ \\ $((01)^{\frac{a}{2}} 2)^2 (01)^{\frac{a}{2}-1} 2 (01)^{\frac{a}{2}-2} 2 3$
         \end{tabular}  \\ \cline{1-1} \cline{3-4}
$5 \leq r = a-1$   &         & $b+2$    & $((01)^{\frac{a}{2}}2)^3 (01)^{\frac{a}{2}-2} 23$   \\ \hline
\end{tabular}
\begin{table}[ht]
\caption{Pre-periodic nim-values for ultimately periodic games $\S(1,a,b)$ in Table \ref{T.2.3.3}.} \label{T.2.3.3.b}
\end{table}
\end{center}

\begin{center}
\begin{tabular}{|c|c|c|}\hline
\begin{tabular}{c}
Subtraction set \\
$\{a,b,b+a+1\}$ \\
$a \geq 4$ \\
$b = 2a+r$ \\
$r \leq 2$
\end{tabular}
     & $n_0$  & pre-periodic nim-values   \\ \hline
$\{a,2a,3a+1\}$     & $4a$      & $0^a1^a2^a03^{a-1}$  \\ \hline
\{a,2a+2,3a+3\}     & $6a+6$     & $0^a1^a0^22^{a-2}1^203^{a-3}2^2120^{a-1}31^{a-1}0^23$    \\ \hline
\end{tabular}
\begin{table}[ht]
\caption{Pre-periodic nim-values for ultimately periodic games $\S(a,b,b+a+1)$ in Table \ref{T.2.6}.} \label{T.2.6.b}
\end{table}
\end{center}


\end{document}